\numberwithin{equation}{section}
\theoremstyle{plain}
	\newtheorem{theorem}[equation]{Theorem}
	\newtheorem{lemma}[equation]{Lemma}
	\newtheorem{proposition}[equation]{Proposition}
	\newtheorem*{theorem*}{Theorem}
\theoremstyle{definition}
	\newtheorem{definition}[equation]{Definition}
	\newtheorem{example}[equation]{Example}
\theoremstyle{remark}
	\newtheorem*{remark*}{Remark}
\newcommand{\R}{\mathbb{R}}
\newcommand{\U}{{U}}
\newcommand{\V}{{V}}
\newcommand{\ob}{_{o}}
\newcommand{\bR}{\mathbf{R}}
\newcommand{\bC}{\mathbf{C}}
\newcommand{\bD}{\mathbf{D}}
\newcommand{\Rcat}{\mathbf{R}}
\newcommand{\Vect}{\mathrm{\bf Vect}}
\newcommand{\Mod}{\mathrm{\bf Mod}}
\newcommand{\C}{\mathrm{\bf C}}
\newcommand{\RC}{\bC^\bR}
\newcommand{\RD}{\bD^\bR}
\newcommand{\Met}{\mathrm{\bf Met}}
\newcommand{\Cat}{\mathrm{\bf Cat}}
\newcommand{\Top}{\mathrm{\bf Top}}
\newcommand{\Zcat}{\mathbf{Z}}
\newcommand{\Ncat}{\mathbf{N}}
\newcommand{\ncat}{\mathbf{n}}
\newcommand{\id}{\mathrm{I}}
\newcommand{\To}{\Rightarrow}
\newcommand{\xto}{\xrightarrow}
\newcommand{\intdist}{\mathrm{d}_\mathrm{Int}}
\newcommand{\botdist}{\mathrm{d}_\mathrm{Bot}}
\newcommand{\dgm}{\mathrm{dgm}\vspace{0.25em}}
\newcommand\set[1]{\left\{#1\right\}}
\newcommand{\Rips}{\mathcal{V}}
\newcommand{\Cech}{\mathcal{C}}
\begin{document}

\title[Higher Interpolation and Extension for Persistence Modules]{Higher Interpolation and Extension\\ for Persistence Modules}

\author{Peter Bubenik}
\address[Peter Bubenik]{Department of Mathematics, University of Florida}
\email{peter.bubenik@ufl.edu}

\author{Vin de Silva}
\address[Vin de Silva]{Department of Mathematics, Pomona College}
\email{Vin.deSilva@pomona.edu}

\author{Vidit Nanda}
\address[Vidit Nanda]{{Mathematics Institute, University of Oxford and The Alan Turing Institute}}
\email{nanda@maths.ox.ac.uk}

\begin{abstract}
The use of topological persistence in contemporary data analysis has provided considerable impetus for investigations into the geometric and functional-analytic structure of the space of persistence modules. In this paper, we isolate a coherence criterion which guarantees the extensibility of non-expansive maps into this space across embeddings of the domain to larger ambient metric spaces. Our coherence criterion is category-theoretic, allowing Kan extensions to provide the desired extensions. 
Our main construction gives an isometric embedding of a metric space into the metric space of persistence modules with values in the spacetime of this metric space.
As a consequence of such ``higher interpolation'', it becomes possible to compare Vietoris-Rips and \v{C}ech complexes built within the space of persistence modules.

\end{abstract}

\maketitle

\section*{Introduction}

The combination of rigorously-developed foundations \cite{bs,cdsgo}, efficient computability \cite{zc,mors} and stability properties \cite{cseh, ccsgo} have resulted in  the widespread adoption of {\bf topological persistence} \cite{elz, zc} as a technique for the analysis of large and complex datasets \cite{car, ghr, ns}. The output of this process is a collection of {\em persistent homology groups}, which are typically represented via a barcode or a persistence diagram. Recent applications of persistence often confront dynamically evolving data \cite{kolm, ricc}, and in these cases one requires the ability to make inferences about the dynamics from collections of persistence diagrams. Substantial efforts have been devoted to this end; among the best-known outcomes are {\em vineyards} \cite{vineyards}, {\em Fr\'echet means} \cite{frechet} and {\em persistence landscapes} \cite{landscapes}.

In this work, we provide a new geometric lens with which to view the space of persistence diagrams. Our main result is in fact a statement about the space of (sufficiently tame) {\em persistence modules} --- these consist of vector spaces and linear maps indexed by the real line $\R$, and their representation theory produces persistence diagrams \cite{oudot}. The class $\Mod$ of persistence modules admits an {\em interleaving metric}, and the {\em interpolation lemma} from \cite{ccsgo} establishes that $\Mod$ is a path metric space --- two modules which are $e$-interleaved for $0 \leq e < \infty$ can always be connected by a path in $\Mod$ of length $e$. This lemma plays a key role in the proof of the {\bf stability theorem}, which confirms that if two point-clouds are within Hausdorff distance $e$ of each other then (their persistence modules are $e$-interleaved, and hence) their persistence diagrams are also within {\em bottleneck distance} $e$ of each other \cite{cseh, ccsgo}.

The interpolation lemma provides an affirmative answer to the {\em Lipschitz extension problem} \cite[Ch 1]{blin} encoded in the following commutative diagram of metric spaces (and $1$-Lipschitz maps):
\begin{equation} \label{eq:ext1}
\xymatrixrowsep{0.4in}
\xymatrixcolsep{0.4in}
\xymatrix{
\{0,e\} \ar@{->}[r]^{f} \ar@{_{(}->}[d] & \Mod \\
[0,e] \ar@{-->}[ur]_{f'} & 
}
\end{equation}
Here $\{0,e\}$ and $[0,e]$ are given the traditional Euclidean metric inherited from $\R$, and the fact that $f$ is $1$-Lipschitz follows immediately from our assumption that $f(0)$ and $f(e)$ are $e$-interleaved. The existence of an extension $f'$ allows us to assign intermediate persistence modules $f'(x)$ to all $x$ in $[0,e]$ so that $f'$ agrees with $f$ on the endpoints $\{0,e\}$ and the interleaving distance between $f'(x)$ and $f'(y)$ does not exceed $|x-y|$. Similarly, one seeks $1$-Lipschitz extensions across more general choices of metric inclusions. Our objective here is to prescribe sufficient categorical conditions on $f$ which guarantee the existence of such extensions. Here is a consequence of our main result:
\begin{theorem*}[\textbf{Higher interpolation and extension}]
Let $M$ be any metric space and $A$ a subspace. If a map $f:A \to \Mod$ is {\bf coherent} (in the sense of Definition \ref{def:coh} below), then it admits three $1$-Lipschitz extensions $M \to \Mod$.
\end{theorem*}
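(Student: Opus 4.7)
The plan is to read the coherence hypothesis as packaging $f$ into a single functor out of a category built from the ``spacetime'' of $A$, then use Kan extensions along a fully faithful inclusion into the analogous category for $M$, and finally repackage the results as persistence-module-valued maps on $M$.

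\textbf{Step 1: Reformulate coherence.} I expect the forthcoming Definition \ref{def:coh} to say that a coherent $f\colon A \to \Mod$ is equivalent data to a functor $\widetilde f \colon \mathcal S(A) \to \Vect$, where $\mathcal S(A)$ is the spacetime poset with objects $(a,s) \in A \times \R$ and a unique morphism $(a,s) \to (b,t)$ whenever $d_A(a,b) \leq t-s$. One recovers the original persistence module $f(a)$ by restricting $\widetilde f$ to the fiber $\{a\} \times \R \subset \mathcal S(A)$, and the $1$-Lipschitz condition on $f$ is automatic from functoriality of $\widetilde f$ applied to the diagonal ``interleaving'' morphisms. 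The metric inclusion $A \hookrightarrow M$ promotes to a fully faithful inclusion of posets $\iota \colon \mathcal S(A) \hookrightarrow \mathcal S(M)$.

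\textbf{Step 2: Apply Kan extensions.} Since $\Vect$ is bicomplete, the pointwise left and right Kan extensions
\begin{equation*}
\widetilde f_L \;=\; \mathrm{Lan}_\iota \widetilde f \qquad\text{and}\qquad \widetilde f_R \;=\; \mathrm{Ran}_\iota \widetilde f
\end{equation*}
exist as functors $\mathcal S(M) \to \Vect$, and because $\iota$ is fully faithful the unit and counit are isomorphisms on $\mathcal S(A)$, so both restrict to $\widetilde f$ there. Repackaging by fibers gives maps $f_L, f_R \colon M \to \Mod$ that extend $f$. The universal property of Kan extensions produces a canonical natural transformation $\widetilde f_L \To \widetilde f_R$; a natural third extension is then the objectwise image of this transformation, which one checks is itself a functor $\mathcal S(M) \to \Vect$ and hence yields a middle extension $f_M$.

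\textbf{Step 3: The Lipschitz property.} For $x,y \in M$ set $e = d_M(x,y)$. I need to exhibit $e$-interleaving maps $f_L(x) \to f_L(y)[e]$ and $f_L(y) \to f_L(x)[e]$ satisfying the shift identities, and similarly for $f_R$ and $f_M$. Each such map is a morphism in $\mathcal S(M)$ of the form $(x,s) \to (y,s+e)$, and applying the functor $\widetilde f_L$ produces the linear map we want; the shift identities fall out of composition in $\mathcal S(M)$. This is the structural payoff of having phrased everything through $\mathcal S(M)$.

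\textbf{Main obstacle.} The delicate step is verifying that the pointwise left and right Kan extensions actually land in the subcategory $\Mod$ of \emph{tame} persistence modules, rather than in arbitrary $\R$-indexed diagrams of vector spaces, and that the colimits/limits defining them are controlled by the coherence hypothesis. Concretely, for $(x,t) \in \mathcal S(M)$ the value $\widetilde f_L(x,t)$ is a colimit over the comma category $\iota \downarrow (x,t)$, which is typically very large; I expect coherence to translate into a cofinality statement that reduces this to a diagram one can control (e.g.\ indexed by points of $A$ within distance $t - s$ of $x$ for varying $s$). Once this cofinality is in hand, the fact that the restriction to each fiber is a persistence module, and that the interleaving maps assembled above genuinely come from a functor out of $\mathcal S(M)$, should follow. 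Verifying that the middle extension $f_M$ also lies in $\Mod$ will reduce to stability of the tameness condition under images of natural transformations.
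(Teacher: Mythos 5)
Your approach is essentially the paper's own (Theorem \ref{thm:main} via Proposition \ref{prop:kaaan}): coherence means $f$ factors as $G^\Rcat \circ \eta_A$ for a functor $G: A\R \to \Vect$ (your $\widetilde f$ on the spacetime $\mathcal{S}(A) = A\R$), one Kan extends $G$ along the fully faithful inclusion $A\R \hookrightarrow M\R$ to obtain $\hat{G}$, and the extension is $\hat{G}^\Rcat \circ \eta_M$. The three choices (left Kan, right Kan, and image of the canonical comparison) match Proposition \ref{prop:kaaan} exactly, and your Step 3 is an unpacked version of the paper's cleaner observation that $\eta_M$ and $\hat{G}^\Rcat$ are each $1$-Lipschitz as morphisms in $\Met$, together with thinness of $M\R$ forcing the interleaving identities.

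The one place you go astray is the ``main obstacle,'' which is a non-issue and signals a misreading of the paper's definitions. The paper defines $\Mod$ to be the \emph{full} functor category $\Vect^\Rcat$; tameness is not part of the definition of a persistence module, and appears only as a hypothesis in the isometry theorem relating $\intdist$ to $\botdist$ (a result not invoked in the proof of the extension theorem). Since $\Vect$ is bicomplete, the pointwise Kan extensions exist as functors $M\R \to \Vect$, and restricting any such functor to a fiber $\{x\} \times \R$ automatically yields an object of $\Mod$; there is no cofinality reduction to perform and nothing about the size or shape of the comma categories $\iota \downarrow (x,t)$ to control. The worry you raise would indeed be serious if $\Mod$ meant tame or pointwise-finite-dimensional modules, but that is not the paper's setting, and you should delete this step rather than try to discharge it.
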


In order to precisely describe what it takes for $f:A \to \Mod$ to be coherent, we examine a pair of functors relating $\Cat$, the usual category of small categories, and $\Met$, the category of metric spaces with $1$-Lipschitz maps as morphisms. Although our functors fail to form an adjoint pair in general, there is a distinguished natural transformation $\eta$ from the identity functor on $\Met$ to their composite. Coherent maps are precisely those $f: A \to \Mod$ which factor through this natural transformation. The rest of this paper is organized as follows: in Section \ref{sec:geom} we use known facts about the metric space of persistence modules to describe a functor $\Cat \to \Met$, and in Section \ref{sec:met-cat} we describe a functor $\Met \to \Cat$. The proof of the higher interpolation theorem occupies Section \ref{sec:coh} and some of its consequences are explored in Section \ref{sec:cons}.

%

\section{The Geometry of Persistence Modules}\label{sec:geom}

We assume that the reader has prior familiarity with the basics of category theory \cite{maclane,ahs:book}. We also adopt the following conventions throughout: given a category $\bC$ we write $\bC\ob$ for its class of objects and $\bC(x,y)$ for its set of morphisms from an object $x$ to an object $y$. For a small category $\bC$, we will denote the {\em category of functors} from $\bC$ to $\bD$ by $\bD^\bC$. Although we will survey some relevant definitions and results here, the reader is invited to consult \cite{bs, bdss,ccsgo,cdsgo,oudot} for detailed background material on the categorical and metric aspects of persistence modules.

\subsection{Persistence Modules as Functors}

Let $\Rcat$ denote the category whose objects are the real numbers $\R$, and which admits a unique morphism $a \to b$ whenever $a \leq b$. Persistent homology associates algebraic invariants to {\em filtered topological spaces}, which are naturally regarded as members of $\Top^\Rcat$ -- these are functors from $\bR$ to the category $\Top$ of topological spaces and continuous maps. In practice, one also encounters filtered spaces indexed by proper subcategories of $\Rcat$ (typically finite sets $\ncat = \{0,1,\ldots,n\}$, natural numbers $\Ncat$, or integers $\Zcat$). In such cases, a standard dictionary may be used to modulate between indexing subcategories: in the diagram
\begin{equation} \label{eq:nzr}
\xymatrixrowsep{0.5in}
\xymatrixcolsep{0.3in}
  \xymatrix{\ncat \ar[r] \ar[drrr] & \Ncat \ar[r] \ar[drr] & \Zcat \ar[r] \ar[dr] & \Rcat \ar[d]\\
    & & & \Top}
\end{equation}
horizontal arrows are inclusions and pullbacks are given by restriction. Conversely, one may extend 
\begin{itemize}
\item $U:\ncat \to \Top$ to $U':\Ncat \to \Top$ by assigning $U'(k) = U(n)$ for $k>n$;
\item $U':\Ncat \to \Top$ to $U'':\Zcat \to \Top$ by assigning $U''(k) = \emptyset$ for $k<0$;
\item $U'':\Zcat \to \Top$ to $U''':\Rcat \to \Top$ by assigning $U'''(a) = U''\left(\lfloor a \rfloor\right)$ for all $a \in \R$
\end{itemize} 
(here $\lfloor \cdot \rfloor$ indicates the floor function). That such constructions are possible in each case depicted above is a pleasant consequence of the fact that $\Top$ admits left Kan extensions. 

Since we may pass from one of these functors to another, we will henceforth treat all filtered spaces as functors $\Rcat \to \Top$. Letting $\Vect$ denote the category of vector spaces and linear maps over a fixed underlying field, we note that any functor $H: \Top \to \Vect$ (such as singular homology) induces a push-forward from $\Top^\Rcat$ to $\Vect^\Rcat$ via post-composition. The resulting structure is a persistence module.
\begin{definition}
The category $\Mod$ of {\em persistence modules} is $\Vect^\Rcat$ -- its objects are functors $U:\Rcat \to \Vect$ and morphisms in $\Mod(U,V)$ are natural transformations $U \To V$.
\end{definition}
The morphisms from $U$ to $V$ in $\Mod$ admit a convenient pointwise description as collections of linear maps $\{\Phi(a):U(a) \to V(a) \mid a \in \R\}$ which satisfy the following property. Across all choices of $a \leq b$ in $\R$, the following diagram commutes:
\[
\xymatrixcolsep{0.6in}
\xymatrix{
U(a) \ar@{->}[r]^{U(a \leq b)} \ar@{->}[d]_{\Phi(a)} & U(b) \ar@{->}[d]^{\Phi(b)}  \\
V(a)  \ar@{->}[r]_{V(a \leq b)} & V(b) 
}
\]
It is often convenient to pass to a more general setting by considering different choices of target categories.  We therefore follow \cite{bs} and work with $\RC$ for an arbitrary category $\bC$, keeping in mind that this functor category specializes to $\Mod$ whenever $\bC = \Vect$. 
We call $\RC$ the category of \emph{persistence modules with values in $\C$}.

\subsection{The Interpolation Lemma}\label{ssec:interleave}

For each $e \geq 0$, one has a \emph{translation} functor $T_e: \Rcat \to \Rcat$ (sending $a$ to $a+e$) as well as a unique natural transformation $\sigma_e$ from the identity $1_\Rcat$ to this functor. It is readily confirmed that every such translation induces an endofunctor on $\RC$ which
\begin{enumerate}
\item sends each $U \in \RC\ob$ to $UT_e$ satisfying $UT_e(a) = U(a+e)$ for $a \in \R$, and
\item admits a distinguished natural transformation
  from the identity.
\end{enumerate}

\begin{definition}\label{def:int}
Given $e \geq 0$, two functors $\U,\V \in \RC\ob$ are said to be \emph{$e$-interleaved} if there are morphisms $\Phi: \U \to \V T_e$ and $\Psi: \V \to \U T_e$ in $\RC$ satisfying $(\Psi T_e)\Phi = \U\sigma_{2e}$ and $(\Phi T_e)\Psi = \V\sigma_{2e}$, as encoded in commutativity of the following diagrams:
\[
\xymatrix{
U \ar@{->}[rr]^{U\sigma_{2e}} \ar@{->}[dr]_{\Phi} & & UT_{2e} & &  UT_e \ar@{->}[dr]^{\Phi T_e} & \\
& VT_e \ar@{->}[ur]_{\Psi T_e} &   &   V \ar@{->}[rr]_{V\sigma_{2e}} \ar@{->}[ur]^{\Psi}  & & VT_{2e}                                                                           
}
\]
\end{definition}

The {\em interleaving distance} \cite{bs,ccsgo, cdsgo, les} on $\RC\ob$ is defined as follows:
\[
\intdist(\U,\V) = \inf \left\{ e \geq 0 \ | \ U, V \text{ are $e$-interleaved}\right\},
\]
with the understanding that $\intdist(\U,\V)=\infty$ if no interleaving exists. 

We want to say that $\RC\ob$ together with the interleaving distance is a metric space. To make this possible,
throughout this article, we relax the usual requirements for a metric space $(M,d)$ in three ways: 
\begin{enumerate}
\item we allow $d(x,y)$ to attain the value $+\infty$, 
\item we allow $d(x,y) = 0$ for $x \neq y$ in $M$, and
\item we allow $M$ to be a class rather than a set. 
\end{enumerate}
In other words, we work with {\bf symmetric Lawvere metric spaces} as defined in \cite{lawvere:metric}.

At times, we will not allow the third generalization. That is, we will need the collection of elements in a metric space to be a set. Let $\Met$ be the category whose objects are metric spaces with a set of elements, and whose morphisms are {\em non-expansive} or {\em 1-Lipschitz} maps\footnote{That is, a map $f:(M,d_M) \to (N,d_N)$ satisfying $d_N(f(x),f(y)) \leq d_M(x,y)$ for all $x,y \in M$.}.
Similarly, let $\Cat$ denote the category of small categories and functors.

\begin{theorem}[\cite{bs}] \label{thm:cat-met}
For each category $\bC$ and functor $H:\bC \to \bD$,
  \begin{enumerate}
    \item The pair $\left(\RC\ob,\intdist\right)$ is a metric space.
    \item The map $H^\Rcat:\RC\ob \to \RD\ob$ sending $\U$ to $H\U$ is 1-Lipschitz with respect to $\intdist$.
    \end{enumerate}
	Specializing to small categories, these assignments define a functor $\bullet^\Rcat: \Cat \to \Met$.
\end{theorem}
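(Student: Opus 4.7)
The plan is to verify, in sequence: (1) the axioms of a symmetric Lawvere metric space for $(\RC\ob, \intdist)$, (2) that each $H^\Rcat$ is $1$-Lipschitz, and (3) that the resulting assignments preserve identities and composition of functors.

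For (1), reflexivity follows from taking $e = 0$ with $\Phi = \Psi = 1_U$, noting that $T_0 = 1_\Rcat$ and $\sigma_0$ is the identity natural transformation; symmetry of $\intdist$ is built into Definition \ref{def:int} by swapping the roles of $(U,\Phi)$ and $(V,\Psi)$. The substantive axiom is the triangle inequality. Given an $e$-interleaving $(\Phi,\Psi)$ of $U,V$ and an $e'$-interleaving $(\Phi', \Psi')$ of $V,W$, I would form the candidate morphisms $\Xi = (\Phi' T_e) \circ \Phi : U \to W T_{e+e'}$ and $\Omega = (\Psi T_{e'}) \circ \Psi' : W \to U T_{e+e'}$, and then verify $(\Omega T_{e+e'}) \circ \Xi = U\sigma_{2(e+e')}$ by a diagram chase: first whisker the identity $(\Psi' T_{e'}) \circ \Phi' = V\sigma_{2e'}$ on the right by $T_e$ to collapse the inner $\Phi', \Psi'$ pair; next apply naturality of $\sigma_{2e'}$ against $\Phi$ to push the $\sigma$-component past $\Phi$; finally invoke $(\Psi T_e) \circ \Phi = U\sigma_{2e}$ together with the elementary identity $\sigma_{2(e+e')} = (\sigma_{2e'} T_{2e}) \circ \sigma_{2e}$ in $\Rcat$. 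The mirror equation $(\Xi T_{e+e'}) \circ \Omega = W\sigma_{2(e+e')}$ follows from the same argument with the roles of $U$ and $W$ exchanged. Taking infima then yields $\intdist(U,W) \leq \intdist(U,V) + \intdist(V,W)$.

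For (2), post-composing with $H$ sends any $e$-interleaving $(\Phi, \Psi)$ of $U,V$ in $\RC$ to the pair $(H\Phi, H\Psi)$ in $\RD$; the identities $H(V T_e) = (HV) T_e$ and $H(U\sigma_{2e}) = (HU)\sigma_{2e}$ hold by the very definition of post-composition, and the interleaving diagrams persist because $H$ respects composition. This gives the $1$-Lipschitz bound. For (3), functoriality is automatic from $H^\Rcat(U) = H \circ U$: the associativity and unit laws for functor composition immediately yield $(1_\bC)^\Rcat = 1_{\RC\ob}$ and $(KH)^\Rcat = K^\Rcat H^\Rcat$. The main obstacle throughout is the $2$-categorical bookkeeping behind the triangle inequality in (1); the remaining steps reduce to routine verifications.
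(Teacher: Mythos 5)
The paper does not prove this theorem; it is cited directly from [bs] (Bubenik--Scott). Evaluating your proposal on its own merits: it is correct and follows the standard verification. The reflexivity and symmetry observations are right, and the heart of the matter --- the triangle inequality --- is handled by the usual composite-interleaving argument: the candidate morphisms $\Xi = (\Phi' T_e)\circ\Phi$ and $\Omega = (\Psi T_{e'})\circ\Psi'$ have the correct types, and the diagram chase you outline (whiskering the inner triangle identity by $T_e$, commuting $\sigma_{2e'}$ past $\Phi$ via naturality, then applying the outer triangle identity and the fact that $\sigma$'s compose as expected) works precisely because $\Rcat$ is a thin category, so all whiskered $\sigma$'s with the same source and target are automatically equal --- a point worth making explicit, as it dissolves most of the ``$2$-categorical bookkeeping'' you flag as the main obstacle. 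Your handling of (2) correctly reduces to the strict associativity of functor composition (so that $H(VT_e) = (HV)T_e$ and $H(U\sigma_{2e}) = (HU)\sigma_{2e}$), and (3) is indeed immediate. The only small omission is a remark that $\RC\ob$ is a set when $\bC$ is small (since $\Rcat$ is itself small), which is needed for $\bullet^\Rcat$ to land in $\Met$ as the paper has defined it.
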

We call $\left(\RC\ob,\intdist\right)$ the \emph{metric space of persistence modules with values in $\C$} and call the functor $\bullet^{\Rcat}$ the \emph{persistence module functor}.

Recall that a metric space $(M,d)$ is a {\em path metric space} if for each $x,y$ in $M$ the infimum of the lengths of all paths between them equals $d(x,y)$. The following {\bf interpolation lemma} establishes that $\Mod\ob$ is a path metric space when endowed with the interleaving distance as a metric.
\begin{lemma}[\cite{ccsgo}] \label{lem:interpolation}
Given $e \geq 0$ and two $e$-interleaved persistence modules $\U_0$ and $\U_e$ in $\Mod\ob$, there exists a one-parameter family $\set{\U_a \mid a \in (0,e)}$ in $\Mod\ob$ so that $\U_a$ and $\U_b$ are  $|a-b|$-interleaved for all $a,b \in [0,e]$.
\end{lemma}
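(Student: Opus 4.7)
The plan is to reformulate the interpolation problem as a functor extension problem on a ``spacetime'' product poset, which admits a clean solution via Kan extension. I equip $\R \times [0,e]$ with the partial order $(r,s) \preceq (r',s')$ if and only if $r' - r \geq |s - s'|$; routine checks confirm this is a partial order. The key conceptual observation is that a functor $F : (\R \times [0,e], \preceq) \to \Vect$ is equivalently a one-parameter family $\{\U_t := F(-, t)\}_{t \in [0,e]}$ of persistence modules equipped with a coherent $(b-a)$-interleaving between $\U_a$ and $\U_b$ for every pair $a \leq b$ in $[0,e]$. Indeed, on each slice $\R \times \{t\}$ the spacetime order reduces to the usual order on $\R$, yielding the internal structure maps of $\U_t$; across slices, $(r,a) \preceq (r + (b-a), b)$ and $(r,b) \preceq (r + (b-a), a)$ provide the required interleaving morphisms; and functoriality applied to composites such as $(r,a) \preceq (r+(b-a), b) \preceq (r+2(b-a), a)$ recovers the interleaving identities of Definition~\ref{def:int}.

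Having established this dictionary, I next note that the given $e$-interleaving $(\U_0, \U_e, \Phi, \Psi)$ assembles exactly into a functor $F_0 : (\R \times \{0,e\}, \preceq) \to \Vect$. The induced spacetime order on $\R \times \{0,e\}$ has cross-level arrows $(r,0) \preceq (r',e)$ (and $(r,e) \preceq (r',0)$) precisely when $r' \geq r + e$; the interleaving morphisms $\Phi$ and $\Psi$ (composed with internal maps) prescribe the values of $F_0$ on these arrows, and the interleaving identities from the hypothesis are exactly the functoriality relations on composable cross-level pairs.

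The construction of the interpolating family is then as follows. Let $j : \R \times \{0,e\} \hookrightarrow \R \times [0,e]$ denote the inclusion, and set $F := \operatorname{Lan}_j F_0$, the left Kan extension of $F_0$ along $j$. Since $\Vect$ is cocomplete, this Kan extension exists and is computed pointwise by a colimit. Since $j$ is fully faithful, the unit $F_0 \To F \circ j$ is a natural isomorphism, so $F$ restricted to $t = 0$ and $t = e$ recovers $\U_0$ and $\U_e$. I define $\U_t(r) := F(r, t)$ for $t \in (0,e)$ and $r \in \R$. The required $|a-b|$-interleavings between any $\U_a$ and $\U_b$ in $[0,e]$ are then automatic from the functoriality of $F$ on the spacetime order, exactly via the correspondence of the first paragraph.

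The main obstacle is the conceptual reframing in the first step: recognising that the interleaving data for a pair of persistence modules is literally the same thing as functoriality on the two-slice spacetime poset $(\R \times \{0,e\}, \preceq)$. Once this dictionary is installed, the remainder of the argument invokes only standard facts about pointwise left Kan extensions into cocomplete targets. A minor technical check is that each fibre $F(-, t)$ is genuinely a functor $\R \to \Vect$, but this is automatic from the order-preserving slice inclusion $r \mapsto (r, t)$. Using the right Kan extension in place of the left would produce a second valid interpolating family by the same argument, foreshadowing the multiple extensions available in the main theorem.
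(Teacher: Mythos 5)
Your proof is correct. The paper cites this lemma from \cite{ccsgo} without reproducing a proof; the original argument there is an explicit hands-on construction of the intermediate modules. Your spacetime-plus-Kan-extension argument is a genuinely different route from \cite{ccsgo}, but it is precisely the machinery this paper builds in general: your poset $(\R\times[0,e],\preceq)$ is (up to swapping coordinate conventions) the spacetime $[0,e]\R$ of Section~\ref{sec:met-cat}, your observation that an $e$-interleaving of $\U_0,\U_e$ assembles into a functor on the two-slice spacetime $\{0,e\}\R$ is the $n=2$ instance of Proposition~\ref{prop:oldcoh} (where the triple condition is vacuous, so \emph{every} interleaving of two modules is coherent), and the passage to $\operatorname{Lan}_j F_0$ is exactly the $A=\{0,e\}\hookrightarrow M=[0,e]$ specialization of Theorem~\ref{thm:main} via Proposition~\ref{prop:kaaan}. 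In fact you prove more than the statement asks: the Kan extension produces not merely intermediate modules but a compatible system of interleaving morphisms, i.e.\ the sharp interpolation Lemma~\ref{lem:stronger} of \cite{cdsgo}, whose proof in that reference is also via Kan extension. Two small points worth making explicit: the assertion that the unit $F_0\To F\circ j$ is an isomorphism because $j$ is fully faithful requires the Kan extension to be pointwise, which holds here since $\Vect$ is cocomplete; and the verification that an $e$-interleaving really determines a functor $F_0$ (i.e.\ that arbitrary composites of cross-level arrows are consistent) does need one line of induction beyond the single interleaving identity, using naturality of $\Phi,\Psi$ to slide internal structure maps past them --- your phrase ``composed with internal maps'' gestures at this but the reduction deserves a sentence.
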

Note in general that the interpolating family $U_t$ is not unique, and that this lemma need not hold for general categories $\RC$. (We will describe additional hypotheses on $\bC$ under which the interpolation lemma holds for $\RC$ in Proposition \ref{prop:kaaan}.)

As mentioned in the Introduction, our main result is a higher interpolation lemma. A simple example illustrating that higher interpolations are not always possible may be found in \cite{dsn}, and we will reproduce it here in Section \ref{ssec:failure}. On the other hand, a pathway towards the desired generalization is provided by the following {\bf sharp interpolation lemma}: not only can one find an interpolating family of modules, but one can also find a compatible family of interleaving maps between them.
\begin{lemma}[\cite{cdsgo}] \label{lem:stronger}
Given persistence modules $\U_0$ and $\U_e$ along with morphisms $\Phi:\U_0 \to \U_eT_e$ and $\Psi:\U_e \to \U_0T_e$ which realize an $e$-interleaving, there exist
\begin{enumerate}
\item persistence modules $\set{\U_a \mid a \in (0,e)}$, and
\item module morphisms $\Phi_a^b:\U_a \to \U_bT_{b-a}$ and $\Psi_b^a:\U_b \to \U_aT_{b-a}$ for all $a \leq b$ in $[0,e]$,
\end{enumerate}
so that 
\begin{enumerate}
\setcounter{enumi}{2}
\item $\Phi_0^e = \Phi$ and $\Psi_e^0 = \Psi$,
\item $\Phi_a^b$ and $\Psi_b^a$ realize a $(b-a)$-interleaving between $\U_a$ and $\U_b$, and
\item \label{item:coherent} $(\Phi_b^c T_{b-a}) \Phi_a^b = \Phi_a^c$ and $(\Psi_c^b T_{b-a}) \Psi_b^a = \Psi_c^a$ hold for all $a \leq b \leq c$ in $[0,e]$.
\end{enumerate}
\end{lemma}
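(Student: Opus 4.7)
The strategy is to reformulate the given interleaving data as a functor on a small poset, and obtain the entire interpolating family at once as a left Kan extension --- from which every condition will fall out by functoriality. Introduce the poset $\mathbf{P}_e$ whose underlying set is $\R \times [0, e]$, ordered by $(s, a) \leq (t, b)$ iff $t - s \geq |a - b|$, together with its full sub-poset $\mathbf{Q}_e \subseteq \mathbf{P}_e$ on $\R \times \{0, e\}$. The key observation is that an $e$-interleaving between $U_0$ and $U_e$ is precisely a functor $F \colon \mathbf{Q}_e \to \Vect$ whose restrictions to the two horizontal copies of $\R$ recover $U_0$ and $U_e$: internal structure maps $U_i(s \to t)$ supply the horizontal arrows, while an arrow $(s, 0) \leq (t, e)$ (requiring $t \geq s + e$) is sent to the common value $\Phi(t - e) \circ U_0(s \to t - e) = U_e(s + e \to t) \circ \Phi(s)$ (equal by naturality of $\Phi$), and similarly for $(s, e) \leq (t, 0)$ using $\Psi$. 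The axioms $(\Psi T_e)\Phi = U_0\sigma_{2e}$ and $(\Phi T_e)\Psi = U_e\sigma_{2e}$ of Definition \ref{def:int} say exactly that the two round-trip composites $(s, i) \leq (s + e, e - i) \leq (s + 2e, i)$ agree with the structure maps $U_i\sigma_{2e}(s)$, which is the only compatibility needing verification since $\mathbf{Q}_e$ is a poset.

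Since $\Vect$ is cocomplete, the left Kan extension $\hat F := \mathrm{Lan}_\iota F \colon \mathbf{P}_e \to \Vect$ along the inclusion $\iota \colon \mathbf{Q}_e \hookrightarrow \mathbf{P}_e$ exists and is computed pointwise. Because $\iota$ is fully faithful, the unit $F \To \hat F \circ \iota$ is a natural isomorphism, so $\hat F$ genuinely extends $F$. For each $a \in [0, e]$, define $U_a \colon \Rcat \to \Vect$ by $U_a(t) = \hat F(t, a)$; functoriality of $\hat F$ along horizontal arrows makes each $U_a$ a persistence module, and the boundary values recover $U_0$ and $U_e$. For $a \leq b$ in $[0, e]$ and $t \in \R$, set
\[
\Phi_a^b(t) := \hat F\bigl((t, a) \leq (t + (b - a), b)\bigr) \quad \text{and} \quad \Psi_b^a(t) := \hat F\bigl((t, b) \leq (t + (b - a), a)\bigr),
\]
which assemble into natural transformations $\Phi_a^b \colon U_a \To U_b T_{b - a}$ and $\Psi_b^a \colon U_b \To U_a T_{b - a}$.

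Conditions (3)--(5) then follow from functoriality of $\hat F$. Condition (3) is the identification $\hat F \circ \iota \cong F$ coming from the unit, which identifies $\Phi_0^e$ with $\Phi$ and $\Psi_e^0$ with $\Psi$. Condition (4), that $\Phi_a^b$ and $\Psi_b^a$ realize a $(b - a)$-interleaving, reduces to the identity $\hat F\bigl((t, a) \leq (t + (b - a), b)\bigr)$ composed with $\hat F\bigl((t + (b - a), b) \leq (t + 2(b - a), a)\bigr)$ equaling $\hat F\bigl((t, a) \leq (t + 2(b - a), a)\bigr)$, which is automatic because any two parallel composites in the poset $\mathbf{P}_e$ agree. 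Condition (5), the coherence $(\Phi_b^c T_{b-a}) \Phi_a^b = \Phi_a^c$, is likewise functoriality of $\hat F$ applied to the composable triple $(t, a) \leq (t + (b - a), b) \leq (t + (c - a), c)$, with the symmetric argument handling the $\Psi$-identity. The main technical obstacle I anticipate is making the identification in Condition (3) rigorous: while $\hat F(t, 0) \cong U_0(t)$ holds canonically, one must trace through the pointwise colimit formula to confirm that the map $\hat F\bigl((t, 0) \leq (t + e, e)\bigr)$ really coincides with $\Phi(t)$ under the unit isomorphism. This ultimately reduces to observing that the defining arrow already lies in the image of $\iota$, so its $\hat F$-image is determined (up to the unit) by $F$, where by construction it is $\Phi(t)$.
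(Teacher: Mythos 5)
The paper does not actually prove this lemma; it is cited from \cite{cdsgo} (and the paper's own footnote points to Section 3.5 of that reference for the Kan-extension constructions). Your argument is correct, and it is in essence the specialization of this paper's own Theorem \ref{thm:main} and Proposition \ref{prop:kaaan} to the inclusion $\{0,e\}\hookrightarrow[0,e]$: up to swapping coordinates, your posets $\mathbf{Q}_e$ and $\mathbf{P}_e$ are precisely the spacetimes $\{0,e\}\R$ and $[0,e]\R$ of Section \ref{sec:met-cat}; your translation of an $e$-interleaving into a functor $\mathbf{Q}_e\to\Vect$ is the two-point instance of the coherence criterion (compare Proposition \ref{prop:oldcoh}); and your use of $\mathrm{Lan}_\iota$ is the first of the three choices offered in Proposition \ref{prop:kaaan}. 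So you have independently rediscovered the spacetime-plus-Kan-extension machinery that the paper builds and then applies to generalize exactly this lemma.

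One small point worth tightening: you assert that the round-trip composite is ``the only compatibility needing verification'' for $F$ to be a functor. Since $\mathbf{Q}_e$ is a poset, every composable pair $(s,i)\leq(t,j)\leq(u,k)$ must be checked. The monotone ones (where $i=j$ or $j=k$) reduce to functoriality of $U_0,U_e$ together with naturality of $\Phi,\Psi$; the turnaround ones (say $(s,0)\leq(t,e)\leq(u,0)$) reduce, again via naturality of $\Phi$, to the interleaving identity $(\Psi T_e)\Phi=U_0\sigma_{2e}$. So the interleaving axioms are indeed the only \emph{new} input, but naturality of $\Phi$ and $\Psi$ is also doing real work throughout, and a careful write-up should say so.
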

In general, the intermediate modules $U_a$ and maps $\Phi_a^b$ and $\Psi_b^a$ are not uniquely defined. 

\subsection{Failure of Higher Interpolation} \label{ssec:failure}

\begin{figure}[h!]
\includegraphics[scale=0.6]{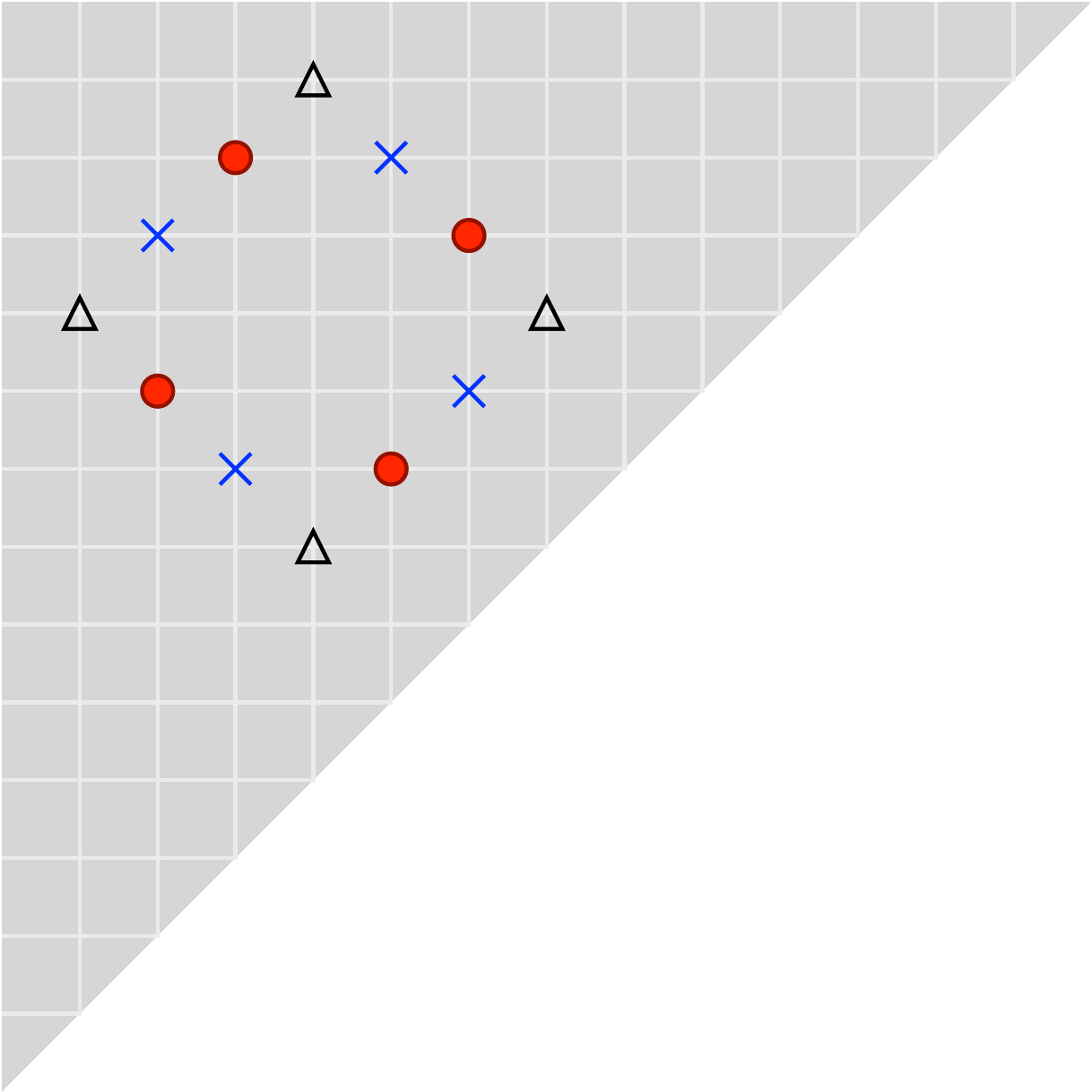}
\caption{Three overlaid persistence diagrams $\Delta, \times$ and $\bullet$. The corresponding persistence modules have pairwise interleaving distance $1$, but there is no persistence module within distance $e$ of all three for any $e < 1$.}
\label{fig:incoh}
\end{figure}

The main result of \cite{cb} asserts that (isomorphism classes of) {\em tame}\footnote{These are modules $U:\Rcat \to \Vect$ for which $U(t)$ is finite-dimensional for each $t$.} persistence modules are faithfully represented by their {\em persistence diagrams} \cite{zc,cseh,ccsgo}, which are multi-sets of points in the upper half-plane\footnote{To be precise, one needs to take decorated persistence diagrams~\cite{cdsgo}. This result extends to the {\em q-tame} modules described in~\cite{cdsgo,ccbds}.}. Moreover, these diagrams admit a {\em bottleneck distance} $\botdist$ and the following {\bf isometry theorem} establishes that the assignment $\dgm$ which sends a (tame) persistence module to its corresponding diagram preserves distances.
\begin{theorem}[\cite{cdsgo, bs, les}]
The equality
\[
\intdist (\U,\V) = \botdist (\dgm~\U, \dgm~\V)
\]
holds across all pairs $\U, \V$ of tame persistence modules\footnote{This result also holds for q-tame persistence modules~\cite{cdsgo}.}.
\end{theorem}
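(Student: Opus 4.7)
The plan is to prove the equality by establishing the two inequalities $\intdist(\U,\V) \leq \botdist(\dgm\,\U, \dgm\,\V)$ and $\botdist(\dgm\,\U, \dgm\,\V) \leq \intdist(\U,\V)$ separately. Both directions rest on the interval decomposition theorem of \cite{cb} invoked earlier in this section: every tame $\U \in \Mod\ob$ is isomorphic to a direct sum of interval modules, and the endpoints of these intervals are precisely the points recorded by $\dgm\,\U$.

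For the easy (stability) direction I first record two observations at the level of interval modules: (i) if two interval modules $I_1, I_2$ have endpoints within $L^\infty$-distance $e$, then they are $e$-interleaved, and (ii) an interval module whose length is less than $2e$ is $e$-interleaved with the zero module. Given any $(e+\delta)$-matching between the diagrams, one assembles interleaving morphisms on matched interval summands and zero morphisms on unmatched ones; direct-summing produces morphisms $\Phi: \U \to \V T_{e+\delta}$ and $\Psi: \V \to \U T_{e+\delta}$ which satisfy the coherence equations of Definition \ref{def:int}. Letting $\delta \to 0$ delivers the desired bound. Care is needed here with the decorated-endpoint formalism of \cite{cdsgo} so that constructed matchings respect open versus closed interval types.

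The converse inequality is the algebraic stability theorem, and this is where the principal obstacle lies. Given an $e$-interleaving realized by $\Phi, \Psi$, one must manufacture a corresponding $e$-matching of the diagrams. The plan is to use the rectangle-measure approach: for each rectangle $R = [a,b] \times [c,d]$ in the upper half-plane, define a non-negative integer $\mu_\U(R)$ counting (with multiplicity) the points of $\dgm\,\U$ inside $R$ via ranks of the structure maps of $\U$. The interleaving relations $(\Psi T_e)\Phi = \U\sigma_{2e}$ and $(\Phi T_e)\Psi = \V\sigma_{2e}$ translate into rank inequalities relating $\mu_\U(R)$ to $\mu_\V(R')$, where $R'$ denotes the $e$-thickening of $R$. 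Converting this additive bound into an honest combinatorial matching is the technical heart of the argument: for finite diagrams one constructs a bipartite graph on the off-diagonal points and verifies Hall's marriage condition using the rectangle estimates; for general tame diagrams one applies the finite result to truncations and extracts a limiting matching by a compactness argument, or, more cleanly, invokes the measure-theoretic formulation developed in \cite{cdsgo}.
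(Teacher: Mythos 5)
The paper does not supply a proof of this theorem; it is quoted as a known result with citations to \cite{cdsgo, bs, les}, and serves only as background for Example~\ref{ex:higher-order} and Theorem~\ref{thm:rips-cech}. So there is no in-paper argument to compare against. Judged on its own, your sketch is a faithful high-level outline of the standard two-inequality proof found in those references: the direction $\intdist \leq \botdist$ is genuinely elementary once one has interval decompositions, and your observations (i) and (ii) together with the direct-sum construction of $\Phi, \Psi$ are exactly how it goes. Your caveat about decorated endpoints is well placed.

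The other direction --- algebraic stability, $\botdist \leq \intdist$ --- is where essentially all of the difficulty lives, and your sketch correctly names the two ingredients (rectangle/box inequalities deduced from the interleaving relations via rank arguments, then a Hall-type condition to extract an actual matching) but does not discharge either one. In particular, the step ``the interleaving relations translate into rank inequalities relating $\mu_\U(R)$ to $\mu_\V(R')$'' is the content of the box lemma, whose proof is itself nontrivial and in \cite{cdsgo} is carried out by interpolating between $\U$ and $\V$ --- i.e.\ it invokes a version of Lemma~\ref{lem:interpolation}, which you don't mention. Two smaller points: your label ``easy (stability) direction'' for $\intdist \leq \botdist$ is nonstandard (that is usually called converse stability; the stability/algebraic-stability theorem is the other inequality), and the opening claim that both directions rest on interval decomposition is not quite right --- the measure-theoretic route you invoke for the hard direction is designed precisely to avoid assuming a decomposition and thereby handle q-tame modules, which is why the paper's footnote can extend the statement to that generality. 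None of this is a wrong turn, but as written it is a roadmap rather than a proof, with the central technical lemma taken on faith.
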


It was shown in \cite{dsn} that higher interpolations may fail to exist even for simple choices of $A \hookrightarrow M$. 

\begin{example} \label{ex:higher-order}
Let $(A,d)$ be the three-point metric space $\{x_1,x_2,x_3\}$ with $d(x_i,x_j) = 1$ for $i\neq j$, and let $M$ be $A$ together with $x_0$ where $d(x_0,x_i) = \frac{1}{2}$ for $i \geq 1$. Let $f:A \to \Mod\ob$ be the function whose images $f(x_j)$ prescribe the three persistence diagrams shown in Figure~\ref{fig:incoh}. Growing a ball of radius one (in the $L_\infty$ norm) around points in any one  diagram subsumes points in the other two diagrams, whence the pairwise bottleneck distances satisfy
\[
\botdist(\dgm~f(x_i), \dgm~f(x_j)) = 1 \text{ for } i \neq j.
\]  Note that any one of these diagrams lies at distance exactly $1$ from the other two. However, no persistence diagram is within distance $< 1$ of each of the three persistence diagrams in Figure~\ref{fig:incoh}. Thus, it follows from the isometry theorem that there is no tame persistence module which may be assigned to $x_0$ in any extension of $f$ without strictly increasing the Lipschitz constant.
\end{example}
This sharp interpolation lemma suggests the reason for the failure of the higher-order interpolation in Example~\ref{ex:higher-order}: the $1$-interleavings do not satisfy the compatibility condition \eqref{item:coherent}. 
 
\section{Categories from Metric Spaces}\label{sec:met-cat}

In Theorem \ref{thm:cat-met} we defined the persistence module functor $\bullet^\Rcat : \Cat \to \Met$. The central goal of this section is to describe the construction of a functor $\Met \to \Cat$, whose interaction with $\bullet^\Rcat$ will be of crucial importance in the proof of our main result. To this end, consider $(M,d) \in \Met\ob$ and let $M\R$ denote the product $M \times \R$ equipped with the binary relation
\[
(x,s) \leq_M (y,t) \text{ if and only if }d(x,y) \leq t-s.
\]
Henceforth, we will often drop the subscript and simply write $(x,s) \leq (y,t)$, relying on context for clarity. 
We call $M\R$ the \emph{spacetime} of $M$.
The following result is straightforward.
\begin{proposition}
The relation $\leq$ induces a pre-order on $M\R$. Moreover, if $d$ is a genuine metric in the sense that $d(x,y) = 0$ holds only for $x=y$, then $\leq$ induces a partial order on $M\R$.
\end{proposition}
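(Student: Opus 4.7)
The plan is to verify the three defining properties of a pre-order (reflexivity and transitivity) and then, under the stronger hypothesis, to establish antisymmetry to upgrade to a partial order. Each property should reduce directly to one of the metric axioms for $d$.

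For \textbf{reflexivity}, I would observe that $(x,s) \leq (x,s)$ is equivalent to the inequality $d(x,x) \leq s-s = 0$, which holds since any (Lawvere) metric satisfies $d(x,x) = 0$. For \textbf{transitivity}, suppose $(x,s) \leq (y,t)$ and $(y,t) \leq (z,u)$, so that $d(x,y) \leq t-s$ and $d(y,z) \leq u-t$. The triangle inequality then gives
\[
d(x,z) \;\leq\; d(x,y) + d(y,z) \;\leq\; (t-s) + (u-t) \;=\; u - s,
\]
which is precisely the condition $(x,s) \leq (z,u)$. These two checks establish that $\leq$ is a pre-order.

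For the \textbf{partial order} refinement, I need to show antisymmetry under the assumption that $d$ is a genuine metric. Assume $(x,s) \leq (y,t)$ and $(y,t) \leq (x,s)$; then $d(x,y) \leq t-s$ and $d(y,x) \leq s-t$. Using symmetry $d(x,y) = d(y,x)$ (which $\Met$-objects satisfy as per the convention in Section~\ref{ssec:interleave}), adding these two inequalities yields $2 d(x,y) \leq 0$, so $d(x,y) = 0$. The genuineness hypothesis then forces $x = y$, after which $t-s \geq 0$ and $s-t \geq 0$ force $s = t$, so $(x,s) = (y,t)$.

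There is no serious obstacle here — the statement is a direct bookkeeping exercise translating the metric axioms (non-negativity at equal points, triangle inequality, symmetry, and non-degeneracy) into the order-theoretic axioms (reflexivity, transitivity, antisymmetry). The only point to flag is that symmetry of $d$ is essential for antisymmetry of $\leq_M$; were one to work with genuinely asymmetric Lawvere metrics, the partial order conclusion could fail, but the symmetric convention adopted in the paper makes this step immediate.
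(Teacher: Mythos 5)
Your proof is correct and follows essentially the same approach as the paper: reflexivity from $d(x,x)=0$, transitivity from the triangle inequality, and antisymmetry from symmetry plus non-negativity plus the genuine-metric hypothesis. The only cosmetic difference is the order of deductions in the antisymmetry step (you derive $d(x,y)=0$ first and then $s=t$; the paper derives $s=t$ first and then $d(x,y)=0$), which is immaterial.
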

\begin{proof}
Since $d(x,x) = 0$, we have reflexivity: $(x,s) \leq (x,s)$. Turning to transitivity, assume $(x,s) \leq (y,t)$ and $(y,t) \leq (z,u)$. By the triangle inequality, we have 
\[
d(x,z) \leq d(x,y) + d(y,z) \leq (t-s) + (u-t) = u-s.
\] Hence, $(x,s) \leq (z,u)$ so $\leq$ is transitive. Finally, if $(x,s) \leq (y,t) \leq (x,s)$, then $0 \leq d(x,y) \leq \min(s-t,t-s)$. So $s=t$. Thus, we have anti-symmetry only if $d(x,y) = 0$ forces $x=y$ in $M$. 
\end{proof}

\begin{figure}
\includegraphics[scale=0.35]{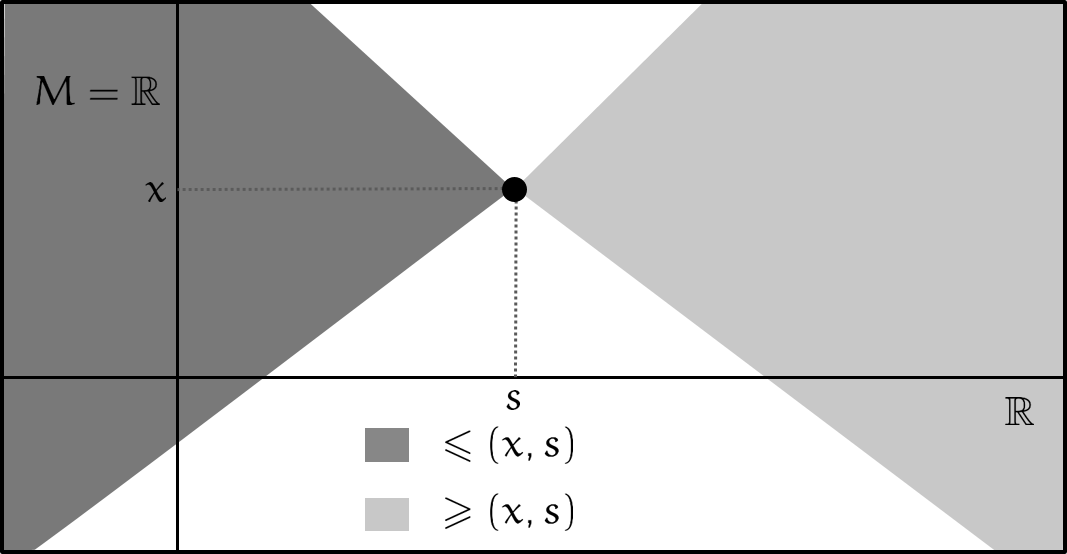}
\caption{An illustration of the order $\leq$ on $M\R$ in the special case where $M = \R$ with the usual metric. Given $(x,s) \in M\R$, the lighter region consists of the up-set $(y,t) \geq (x,s)$ while the darker region consists of the down-set. The up-set is inside the future light cone and the down-set is inside the past light cone.}
\end{figure}

Since $M\R$ is a preordered set, it may be treated as a {\em thin} category\footnote{A category is thin if it admits at most one morphism between any pair of objects.}. Given a $1$-Lipschitz map $f \in \Met(M,N)$, define $f\R: M\R \to N\R$ via the mapping $(x,s) \mapsto (f(x),s)$. 
\begin{theorem} \label{thm:met-cat}
  The assignment $\bullet\R$ prescribes a functor $\Met \to \Cat$, which we call the \emph{spacetime functor}.
\end{theorem}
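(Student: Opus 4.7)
The plan is to unpack what needs to be checked to upgrade the object- and morphism-level assignment $\bullet\R$ into an honest functor $\Met \to \Cat$. On objects, the preceding proposition already shows that $(M\R, \leq_M)$ is a preordered class; since $M$ is required to be a genuine set in $\Met$, the product $M \times \R$ is a set, so $M\R$ is a small thin category. On morphisms, one must check that $f\R:(x,s)\mapsto(f(x),s)$ is in fact a functor between these thin categories, i.e.\ that it preserves the spacetime preorders. Finally, one has to verify the two functoriality axioms: preservation of identities and of composition.

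For the preorder-preservation step, I would argue directly from the $1$-Lipschitz hypothesis. Suppose $(x,s) \leq_M (y,t)$, which by definition means $d_M(x,y) \leq t - s$. Since $f \in \Met(M,N)$ is $1$-Lipschitz, $d_N(f(x),f(y)) \leq d_M(x,y) \leq t - s$, whence $(f(x),s) \leq_N (f(y),t)$, i.e.\ $f\R(x,s) \leq_N f\R(y,t)$. In the thin-category reformulation this exactly says that $f\R$ sends the unique morphism $(x,s)\to(y,t)$ in $M\R$ to the unique morphism $(f(x),s)\to(f(y),t)$ in $N\R$, and since source and target are determined, functoriality of $f\R$ is automatic on composites.

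Preservation of identities is immediate: $\id_M\R$ sends $(x,s)$ to $(\id_M(x),s) = (x,s)$ and is hence the identity functor on $M\R$. For composition, given $f \in \Met(M,N)$ and $g \in \Met(N,P)$ one computes
\[
(g\circ f)\R(x,s) = ((g\circ f)(x),s) = (g(f(x)),s) = g\R(f(x),s) = (g\R \circ f\R)(x,s),
\]
on the nose, so $(g\circ f)\R = g\R \circ f\R$ as functors $M\R \to P\R$.

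None of these steps poses a real obstacle; the only substantive content is the observation that the $1$-Lipschitz condition on morphisms in $\Met$ is exactly what is needed to preserve the spacetime preorder, which is the reason one demands non-expansive maps in the first place. The rest is bookkeeping in thin categories, where checking composition on pairs of composable morphisms reduces to checking it on objects.
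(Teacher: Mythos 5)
Your proof is correct and follows essentially the same route as the paper's: use the $1$-Lipschitz hypothesis to show $f\R$ preserves the spacetime preorder, then check preservation of identities and composition by direct computation. The one small improvement you make is explicitly noting that $M\R$ is a small category because $\Met$ requires its objects to have a set of points, a step the paper leaves implicit.
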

\begin{proof}
  We first confirm that $f\R: M\R \to N\R$ is a morphism in $\Cat$ whenever $f:(M,d_M) \to (N,d_N)$ is $1$-Lipschitz. If $(x,s) \leq (y,t)$, we have $d_M(x,y) \leq t-s$. Since $f$ is $1$-Lipschitz, we obtain $d_N(f(x),f(y)) \leq d_M(x,y) \leq t-s$. So by definition, 
	\[
	f\R(x,s)=(f(x),s) \leq_N (f(y),t) = f\R(y,t).
	\] Thus, $f\R$ is order-preserving. It is easy to confirm that $1_M\R = 1_{M\R}$, so we turn to the task of establishing functoriality. Consider $M \xto{f} N \xto{g} P$ in $\Met$, and note that
\[
g\R \circ f\R(x,s) = g\R(f(x),s) =(gf(x),s)=[(gf)\R](x,s),
\]
which concludes the proof.
\end{proof}

With the existence of the spacetime functor $\bullet\R:\Met \to \Cat$ established, one might hope for an adjunction with the persistence module functor $\bullet^\Rcat:\Cat \to \Met$ from Theorem \ref{thm:cat-met}. If such an adjunction existed, then for each metric space $M \in \Met\ob$ and category $\bC \in \Cat\ob$, we would expect a natural bijection of sets between
\[
\Cat(M\R,\bC) \text{ and } \Met(M,\RC),
\] 
i.e., the set of functors from $M\R$ to $\bC$ would correspond with $1$-Lipschitz maps from $M$ to $\RC$. Example \ref{ex:higher-order} confirms that there is no such bijection in general, whence our functors $\bullet^\Rcat$ and $\bullet\R$ do not constitute an adjoint pair. Instead, we seek solace in the existence of a {\em unit}, as described below. 

Let $F$ denote the endofunctor on $\Met$ arising from the following composition:
\begin{align}\label{eq:F}
F: \Met \xto{\bullet\R} \Cat \xto{\bullet^\Rcat} \Met.
\end{align}
Chasing definitions, one can explicitly describe the effect of $F$ on the objects and morphisms of $\Met$: each metric space $M$ is mapped to $M\R^\Rcat$ (with the interleaving distance), and every $1$-Lipschitz $f:M \to N$ is sent to the map $M\R^\Rcat \to N\R^\Rcat$ which takes $U:\Rcat \to M\R$ to $f\R\circ U:\Rcat \to N\R$.
Note that the objects of $F(M)$ consist of \emph{world lines} in the spacetime of $M$.
\begin{theorem}
\label{thm:unit}
  The functor $F$ admits a natural transformation $\eta: \id_{\Met} \To F$ from the identity endofunctor on $\Met$. Furthermore, for each metric space $M$, $\eta_M$ is the isometric
embedding of $M$ into the metric space of persistence modules valued in $M\R$ given by the constant world lines in the spacetime $M\R$.
\end{theorem}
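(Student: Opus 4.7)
The plan is to define $\eta_M: M \to F(M)$ for each metric space $M$ by sending a point $x \in M$ to the \emph{constant world line} $\eta_M(x): \Rcat \to M\R$ given by $t \mapsto (x,t)$. I would first verify that this really is a functor from $\Rcat$ to $M\R$: given $s \leq t$ in $\Rcat$, the comparability condition $(x,s) \leq_M (x,t)$ reduces to $d(x,x) = 0 \leq t-s$, which is immediate. Naturality of $\eta$ is then a quick computation: for any $1$-Lipschitz map $f: M \to N$ and any $x \in M$, the functor $F(f) \circ \eta_M(x) = f\R \circ \eta_M(x)$ sends $t$ to $f\R(x,t) = (f(x), t) = \eta_N(f(x))(t)$, so $F(f) \circ \eta_M = \eta_N \circ f$.

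The heart of the theorem is the isometric embedding claim $\intdist(\eta_M(x), \eta_M(y)) = d_M(x,y)$, which I would split into two inequalities. For the upper bound, I fix any $e \geq d_M(x,y)$ and construct an explicit $e$-interleaving between $\eta_M(x)$ and $\eta_M(y)$. Because $M\R$ is a thin category, a natural transformation $\Phi: \eta_M(x) \To \eta_M(y) T_e$ exists (and is unique) if and only if $(x,t) \leq_M (y, t+e)$ holds for every $t \in \R$, which holds precisely because $d_M(x,y) \leq e$. The reverse natural transformation $\Psi: \eta_M(y) \To \eta_M(x) T_e$ exists for the same reason, and the two triangles in Definition~\ref{def:int} commute automatically: both sides of each triangle are parallel natural transformations into the thin category $M\R$ and are therefore forced to agree. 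Taking the infimum over such $e$ gives $\intdist(\eta_M(x), \eta_M(y)) \leq d_M(x,y)$.

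For the matching lower bound, I would note that any $e$-interleaving between $\eta_M(x)$ and $\eta_M(y)$ supplies, via the $t=0$ component of its map $\Phi$, a witness $(x,0) \leq_M (y, e)$; unpacking the order then yields $d_M(x,y) \leq e$. Since this holds for every $e \geq \intdist(\eta_M(x), \eta_M(y))$, we obtain $d_M(x,y) \leq \intdist(\eta_M(x), \eta_M(y))$. In particular, $\eta_M$ is $1$-Lipschitz and hence a genuine morphism in $\Met$, completing the verification that $\eta$ is a natural transformation $\id_{\Met} \To F$ whose components are isometric embeddings.

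I do not anticipate a serious obstacle here: the thinness of $M\R$ collapses all the coherence requirements in the interleaving triangles to trivial equalities, so the argument reduces to a direct translation between the spacetime order $\leq_M$ and the metric $d_M$. The only subtlety worth flagging is that the paper works with Lawvere metric spaces, in which $d(x,y) = 0$ need not force $x=y$, so ``isometric embedding'' here should be read as distance-preserving rather than necessarily injective on underlying sets.
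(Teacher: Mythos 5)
Your proof is correct and follows essentially the same route as the paper: define $\eta_M$ via constant world lines, verify it gives a functor $\Rcat \to M\R$, exploit thinness of $M\R$ so that the interleaving triangles commute automatically, bound the interleaving distance on both sides to get isometry, and check naturality via a direct pointwise computation. The only differences are cosmetic --- you check naturality before isometry and phrase the lower bound via the $t=0$ component of $\Phi$ rather than observing directly that the spacetime order forbids any tighter shift.
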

\begin{proof}
  For each $(M,d_M) \in \Met\ob$, we require a 1-Lipschitz map $\eta_M$ in $\Met(M,FM)$ which sends points of $M$ to functors $\Rcat \to M\R$. We provisionally define this map as follows: for each $x \in M$, let $\eta_M(x)$ be the functor which sends $s \in \R$ to $(x,s)$ and $s\leq t$ to $(x,s) \leq (x,t)$. To check that the latter inequality holds in $M\R$, we verify that $d_M(x,x) \leq t-s$. This definition sends identities to identities and respects composition since $M\R$ is a thin category. Thus, $\eta_M(x)$ is indeed a functor.

 Next, we confirm that $\eta_M$ is $1$-Lipschitz. Letting $x$ and $y$ be points in $M$ with $e = d_M(x,y)$, it suffices to construct an $e$-interleaving between $\eta_M(x)$ and $\eta_M(y)$. By definition, $(x,s) \leq (y,t)$ if and only if $d_M(x,y) \leq t-s$. Thus, we have the inequalities
\begin{equation} \label{eq:ineq1}
(x,s) \leq (y,s+e) \text{ for all } s \in \R,
\end{equation}
whose images under $\bullet^\bR$ yield a morphism $\Phi:\eta_M(x) \to \eta_M(y)T_e$ in $FM = M\R^\bR$. And similarly, we have the inequalities
\begin{equation} \label{eq:ineq2}
(y,t) \leq (x,t+e) \text{ for all } t \in \R,
\end{equation}
whose images under $\bullet^\bR$ assemble into a morphism $\Psi:\eta_M(y) \to \eta_M(x)T_e$ in $FM$. One can readily check that $\Phi$ and $\Psi$ furnish the desired $e$-interleaving of $\eta_M(x)$ and $\eta_M(y)$: since $M\R$ is a thin category, the diagrams from Definition \ref{def:int} must commute. Thus, $\eta_M:M \to FM$ is $1$-Lipschitz as desired.

Note that because  $(x,s) \leq (y,t)$ if and only if $d_M(x,y) \leq t-s$, there is no smaller value than $e$ for which that inequalities \eqref{eq:ineq1} and \eqref{eq:ineq2} hold. Thus $\eta_M(x)$ and $\eta_M(y)$ are not $e'$-interleaved for any $e'<e$. Therefore the interleaving distance between $\eta_M(x)$ and $\eta_M(y)$ is $e$, and $\eta_M$ is in fact an isometric embedding.

  Finally, we check that the assignment $x \mapsto \eta_M(x)$ prescribes a natural transformation. Given $f:(M,d_M) \to (N,d_N)$ in $\Met$, we must verify that the following diagram commutes:
  \[
		\xymatrixcolsep{0.4in}
		\xymatrixrowsep{0.4in}
    \xymatrix{
      M \ar[r]^{\eta_M} \ar[d]_f & FM \ar[d]^{F f} \\
      N \ar[r]_{\eta_N} & FN
      }
  \]
  Pick any $x\in M$. For all $s \in \R$, we have 
	\begin{align*}
\big([F f \circ \eta_M](x)\big)(s) &= Ff (x,s) \\ 
                                                   & = (f(x),s) = \big([\eta_N \circ f](x)\big)(s) 
	\end{align*}
	so our diagram commutes and $\eta$ is a natural transformation.
\end{proof}

\section{Coherence and Higher Interpolation} \label{sec:coh}

Throughout this section, we fix a choice of category $\C$ and metric space $A \in \Met$. We also let $\eta:\id_{\Met} \To F$ be the natural transformation from the proof of Theorem \ref{thm:unit}. 
For a functor $G:A\R \to \C$ define $\theta(G) = G^\Rcat \circ \eta_A$:
\begin{equation} \label{eq:coherent}
A \xto{\eta_A} FA = A\R^\Rcat \xto{G^\Rcat} \RC.
\end{equation}

\begin{definition}
\label{def:coh}
The 1-Lipschitz functions $g:A \to \RC\ob$ which lie in the image of $\theta$ are called {\bf coherent}. In the special case where $\C = \Vect$, such functions are called {\bf coherent persistence modules}.
\end{definition}
By definition, for every coherent 1-Lipschitz map $g:A \to \RC$ there is some functor $G:A\R \to \bC$ satisfying $g = \theta(G)$. The map $G^\Rcat$ now serves as an extension of $g$ across $\eta_A: A \to A\R^\Rcat$ because the following diagram commutes by the definition of $\theta$:
\begin{equation} \label{cd:extension-thickening}
  \xymatrix{
    A \ar[r]^{g} \ar[d]_{\eta_A} & \RC \\
    A\R^{\Rcat} \ar@{-->}[ur]_{{G}^{\Rcat}}
}
\end{equation}

If $A$ isometrically embeds into a larger metric space $M \in \Met$, then it is easy to check that $A\R$ is a full subcategory of $M\R$. Given any functor $G: A\R \to \bC$, one has the following functor extension problem:
\begin{equation}
\label{eq:funcext}
  \xymatrix{
    A\R \ar[r]^{G} \ar@{_{(}->}[d] & \C \\
    M\R \ar@{-->}[ur]_{\hat{G}}
}
\end{equation}

Recall that the category $\C$ is (co)complete if it has all (co)limits. The solution to problems such as (\ref{eq:funcext}) for functors taking values in (co)complete categories is furnished by {\bf Kan extensions} \cite[Ch X]{maclane}.
\begin{proposition} \label{prop:kaaan}
An extension $\hat{G}$ of $G$ exists under any of the following circumstances: 
\begin{itemize}
\item  if $\C$ is cocomplete, we can take $\hat{G}$ to be the {\bf left} Kan extension $\mathrm{Lan }G$ of $G$,
\item  if $\C$ is complete, we can take $\hat{G}$ to be the {\bf right} Kan extension $\mathrm{Ran }G$ of $G$,
\item  if $\C$ is bicomplete and abelian, we can take $\hat{G}$ to be the {\bf image} of the universal natural transformation $\mathrm{Lan }G \To \mathrm{Ran }G$.
\end{itemize}
\end{proposition}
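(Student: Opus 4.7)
The plan is to reduce all three bullets to the classical fact that Kan extensions along a fully faithful functor are genuine extensions. First I would verify the hypothesis in our setting. Because $A$ isometrically embeds in $M$, for any pair $(x,s),(y,t) \in A\R$ the relation $(x,s) \leq_A (y,t)$ is equivalent to $d_A(x,y) \leq t-s$, which is equivalent to $d_M(x,y) \leq t-s$, which is exactly $(x,s) \leq_M (y,t)$. Hence the inclusion $\iota: A\R \hookrightarrow M\R$ realizes $A\R$ as a full (thin) subcategory of $M\R$.

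Next, I would invoke the standard fact (see e.g.\ Mac Lane, Ch.\ X, or any textbook treatment of Kan extensions) that when $\iota$ is fully faithful and $\C$ is cocomplete, the pointwise left Kan extension $\mathrm{Lan}_\iota G$ exists and its unit $G \To (\mathrm{Lan}_\iota G) \circ \iota$ is a natural isomorphism; dually, when $\C$ is complete, the counit $(\mathrm{Ran}_\iota G) \circ \iota \To G$ of the pointwise right Kan extension is an isomorphism. Taking $\hat G$ to be $\mathrm{Lan}_\iota G$ or $\mathrm{Ran}_\iota G$ respectively settles the first two bullets.

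For the third bullet, suppose $\C$ is bicomplete and abelian so that both Kan extensions exist. The universal property of $\mathrm{Lan}_\iota$ furnishes a canonical natural transformation $\alpha: \mathrm{Lan}_\iota G \To \mathrm{Ran}_\iota G$: under the adjunction bijection
\[
\mathrm{Nat}\bigl(\mathrm{Lan}_\iota G,\, \mathrm{Ran}_\iota G\bigr) \;\cong\; \mathrm{Nat}\bigl(G,\, (\mathrm{Ran}_\iota G)\circ \iota\bigr),
\]
the transformation $\alpha$ corresponds to the inverse of the counit isomorphism $G \To (\mathrm{Ran}_\iota G) \circ \iota$. A brief diagram chase then shows that the whiskered map $\alpha \iota: (\mathrm{Lan}_\iota G) \circ \iota \To (\mathrm{Ran}_\iota G) \circ \iota$ is itself an isomorphism between two objects each naturally isomorphic to $G$. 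Because $\C$ is abelian, the pointwise epi-mono factorization assembles into a functor $\mathrm{Im}(\alpha): M\R \to \C$, and image formation commutes with precomposition. Since the image of an isomorphism is isomorphic to its source, we deduce $\mathrm{Im}(\alpha) \circ \iota \isom G$, so $\hat G = \mathrm{Im}(\alpha)$ extends $G$.

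The technically delicate step is the third bullet: one must check that the pointwise image $\mathrm{Im}(\alpha)$ assembles into a functor on morphisms of $M\R$, and that this construction commutes with restriction along the fully faithful $\iota$. Both facts are standard consequences of the naturality of the epi-mono factorization in an abelian category, but they deserve explicit verification. The first two bullets, by contrast, are immediate applications of the classical Kan-extension result.
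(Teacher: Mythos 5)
Your argument is correct and takes the same route the paper implicitly relies on: the paper states Proposition \ref{prop:kaaan} without an explicit proof, deferring to Mac Lane (Ch.\ X) and to \cite[Sec 3.5]{cdsgo} for the construction and comparison of the three extensions. Your expansion --- checking that $A\R \hookrightarrow M\R$ is fully faithful because $A$ embeds isometrically (faithfulness being automatic for thin categories), invoking the standard fact that the unit (resp.\ counit) of a pointwise left (resp.\ right) Kan extension along a fully faithful functor is a natural isomorphism, and constructing the canonical comparison $\mathrm{Lan}\,G \To \mathrm{Ran}\,G$ before taking its pointwise image in the abelian case --- is precisely the standard unwinding of those citations.
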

If $\C = \Vect$ (as in the case of persistence modules) then we have all three extensions, but if $\C = \Top$ (as in the case of filtered topological spaces), then we only have the left and right extensions.  The following theorem is the main result of this paper.
\begin{theorem}
\label{thm:main}
  Let $A$ be the subspace of a metric space $M \in \Met$, and assume that the 1-Lipschitz map $g:A \to \RC\ob$ is coherent. If $\C$ is (co)complete, then $g$ admits a coherent 1-Lipschitz extension $\hat{g}:M \to \RC\ob$.
\end{theorem}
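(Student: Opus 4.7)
The plan is to lift the extension problem into the category-theoretic setting, solve it by a Kan extension, and push the solution back through $\theta$. Since $g$ is coherent, Definition \ref{def:coh} furnishes a functor $G: A\R \to \C$ with $g = \theta(G) = G^\Rcat \circ \eta_A$. Because $A$ sits isometrically inside $M$, the spacetime order on $A\R$ is the restriction of the order on $M\R$, so $A\R$ embeds as a full subcategory via an inclusion $i: A\R \hookrightarrow M\R$. Proposition \ref{prop:kaaan} then produces a functor $\hat{G}: M\R \to \C$ extending $G$ across $i$ --- as a left Kan extension when $\C$ is cocomplete, a right one when $\C$ is complete, or the image of the universal natural transformation $\mathrm{Lan}_i G \To \mathrm{Ran}_i G$ when $\C$ is bicomplete and abelian.

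I then set $\hat{g} = \theta(\hat{G}) = \hat{G}^\Rcat \circ \eta_M : M \to \RC\ob$. By construction $\hat{g}$ lies in the image of $\theta$, hence is coherent. It is also 1-Lipschitz, as a composition of two 1-Lipschitz maps: $\eta_M$ is an isometric embedding by Theorem \ref{thm:unit}, and $\hat{G}^\Rcat$ is 1-Lipschitz by Theorem \ref{thm:cat-met}(2).

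The essential point, and what I expect to be the main obstacle, is verifying that $\hat{g}$ strictly restricts to $g$ on $A$. For $x \in A$, the world line $\eta_M(x): \Rcat \to M\R$ has image $\{(x,s) : s \in \R\}$, which lies entirely within $A\R$ and factors as $i \circ \eta_A(x)$. Hence $\hat{g}(x) = (\hat{G} \circ i)^\Rcat(\eta_A(x))$, and this equals $g(x) = G^\Rcat(\eta_A(x))$ precisely when $\hat{G} \circ i = G$ on the nose. Kan extensions are normally specified only up to canonical isomorphism, but full faithfulness of $i$ makes the defining unit $G \To \mathrm{Lan}_i G \circ i$ (respectively the counit $\mathrm{Ran}_i G \circ i \To G$) a natural isomorphism. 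One may therefore \emph{choose} the pointwise (co)limit representatives at each object $(x,s) \in A\R$ to be $G(x,s)$ itself, securing strict equality $\hat{G} \circ i = G$ on the subcategory. For the image variant, the same normalization makes both $\mathrm{Lan}_i G$ and $\mathrm{Ran}_i G$ restrict strictly to $G$, so the universal natural transformation between them restricts to $\mathrm{id}_G$, and its pointwise image on $A\R$ is again $G$. This gives $\hat{g}|_A = g$ and completes the proof.
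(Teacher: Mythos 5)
Your proof is correct and follows essentially the same route as the paper: use coherence to obtain $G:A\R\to\C$, extend it to $\hat G:M\R\to\C$ via a Kan extension along the full embedding $i:A\R\hookrightarrow M\R$, set $\hat g=\theta(\hat G)=\hat G^\Rcat\circ\eta_M$, and then check $\hat g|_A=g$ by combining naturality of $\eta$ with the strict extension identity $\hat G\circ i=G$. The one genuine addition in your write-up is the care you take over strictness of that identity: the paper's Proposition~\ref{prop:kaaan} simply calls $\hat G$ an ``extension,'' implicitly taking for granted that the Kan extension along a fully faithful inclusion can be normalized so that $\hat G\circ i=G$ on the nose (rather than merely up to the canonical iso given by the unit/counit), and your explanation of why and how this normalization works --- and why it also handles the image variant --- is a worthwhile clarification that the paper leaves to the reader.
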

\begin{proof}
  Since $g$ is coherent, it equals $\theta(G)$ for some functor $G:A\R \to \C$. By Proposition \ref{prop:kaaan} and the (co)completeness hypothesis on $\C$, there is an extension $\hat{G}:M\R \to \C$ of $G$ as in (\ref{eq:funcext}). Note that the following diagram of metric spaces and 1-Lipschitz maps commutes:
  \begin{equation*}
    \xymatrix{
      A \ar@{_{(}->}[d] \ar[r]^{\eta_A} & A\R^{\Rcat} \ar@{_{(}->}[d] \ar[r]^{G^{\Rcat}} & \RC \\
      M \ar[r]_{\eta_M} & M\R^{\Rcat} \ar[ur]_{\hat{G}^{\Rcat}} &  
}
  \end{equation*}
The square on the left commutes because $\eta$ is a natural transformation. 
The triangle on the right commutes since $G^{\Rcat}(F) = G \circ F = \hat{G} \circ i \circ F = \hat{G}^{\Rcat}(i(F))$, where $i: A\R \hookrightarrow M\R$ and the middle equality is by~\eqref{eq:funcext}. Since the composite in the top row of our diagram equals $g$, it is immediately seen that the desired extension $\hat{g}:M \to \RC$ is given by $\theta(\hat{G}) = \hat{G}^\Rcat \circ \eta_M$.
\end{proof}

The $\C = \Vect$ specialization of Theorem \ref{thm:main} yields the higher interpolation lemma promised in the Introduction. In this case, for a given $G$ satisfying $\theta(G) = g$ we have at least three possible choices\footnote{For explicit calculations and a comparison of all three extensions in the context of the sharp interpolation lemma, consult \cite[Sec 3.5]{cdsgo} (and particularly Prop 3.6 therein). The image extension is optimal among the three in the sense that it satisfies two universal properties instead of one.} of $\hat{G}$ arising from Proposition \ref{prop:kaaan}. Regardless of which $\hat{G}$ is chosen, the map $\theta(\hat{G})$ is itself coherent by construction, and hence admits further extensions to larger metric spaces.


\section{Consequences} \label{sec:cons}

In this section we describe some applications of Theorem \ref{thm:main}.  

\subsection{Discrete and Continuous Interpolation} \label{ssec:discon}

Let $U_1,\ldots,U_n$ be a collection of $n \geq 1$ persistence modules and let $e \geq 0$ be a fixed constant. Assume further that $U_i$ and $U_j$ are $2e$-interleaved for all $i \neq j$. Let $A = \{a_1,\ldots,a_n\}$ be the metric space where all nontrivial distances $d(a_i,a_j)$ equal $2e$, and note that we may describe each $U_i$ as the image $g(a_i)$ of a 1-Lipschitz map $g: A \to \Mod\ob$. Recall the translation functor $T$ and the natural transformation $\sigma$ as defined in Section \ref{ssec:interleave}. The following result provides an easily-computable criterion for coherence (compare with Lemma \ref{lem:stronger} as well as \cite[Thm 4.2]{dsn}).
\begin{proposition}
\label{prop:oldcoh}
The map $g:A \to \Mod\ob$ is coherent if and only if for all distinct $i,j$ in $\{1 \ldots n\}$ there exist morphisms $\Phi_{ij}:U_i \to U_jT_{2e}$ in $\Mod$ satisfying:
\begin{enumerate} 
\item $(\Phi_{ji}T_{2e}) \circ \Phi_{ij} = U_i\sigma_{4e}$ for all distinct $i,j$ and
\item $(\Phi_{jk} T_{2e}) \circ \Phi_{ij} = \Phi_{ik}T_{2e}$ for all distinct $i,j,k$.
\end{enumerate}
\end{proposition}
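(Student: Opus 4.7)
The plan is to unpack the coherence of $g$ as the existence of a suitable functor $G: A\R \to \Vect$, and to read off the data $\Phi_{ij}$ from $G$ (and vice versa). Because $A\R$ is thin (each pair of objects admits at most one morphism), such a functor $G$ is determined by its object assignments together with choices of morphisms on the generating poset relations, subject only to the constraint that composition be respected. Conditions 1 and 2 will emerge as precisely the compatibility relations forcing these choices to assemble into a well-defined functor.

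For the forward direction, assume $g = \theta(G) = G^\Rcat \circ \eta_A$ for some functor $G$. Since $\eta_A(a_i)$ is the constant world line $s \mapsto (a_i, s)$, the equality $g(a_i) = U_i$ pins down $G(a_i, s) = U_i(s)$ and $G\bigl((a_i, s) \leq (a_i, t)\bigr) = U_i(s \leq t)$. For distinct $i, j$ and each $s \in \R$, the relation $(a_i, s) \leq (a_j, s+2e)$ holds in $A\R$; define $\Phi_{ij}(s) := G\bigl((a_i, s) \leq (a_j, s+2e)\bigr)$. Functoriality of $G$ with respect to the internal $U_i$-arrows upgrades this pointwise family to a natural transformation $\Phi_{ij}: U_i \to U_j T_{2e}$. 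Since $A\R$ is thin, the morphism $(a_i, s) \leq (a_k, s+4e)$ is unique, so all factorizations through intermediate heights $(a_\ell, s+2e)$ must agree under $G$. Taking $k=i$ and comparing the factorization through $(a_j, s+2e)$ with the one through $(a_i, s+2e)$ yields condition 1 (the latter factorization collapsing to $U_i \sigma_{4e}$); taking three distinct indices yields condition 2.

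For the converse, I construct $G: A\R \to \Vect$ from the $\Phi_{ij}$. On objects, set $G(a_i, s) := U_i(s)$; on a morphism $(a_i, s) \leq (a_j, t)$, set $G = U_i(s \leq t)$ when $i = j$, and $G = U_j(s+2e \leq t) \circ \Phi_{ij}(s)$ when $i \neq j$ (noting that $t - s \geq 2e$ in that case). Functoriality amounts to compatibility with composable pairs $(a_i,s) \leq (a_j,t) \leq (a_k,u)$. Cases with two of $i,j,k$ agreeing reduce directly to functoriality of the $U_\ell$'s or to naturality of $\Phi_{ij}$; the remaining cases, after using naturality of the $\Phi$'s to slide $U$-shifts through, collapse onto the minimum-height pattern $(a_i,s) \leq (a_j, s+2e) \leq (a_k, s+4e)$, where condition 2 (three distinct indices) or condition 1 (the case $i = k$) provides the required identity. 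Once functoriality of $G$ is verified, the identity $g = G^\Rcat \circ \eta_A = \theta(G)$ holds by construction, so $g$ is coherent.

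The main obstacle is the converse: one must check that conditions 1 and 2, stated only at the canonical translation height $2e$, suffice to give functoriality of $G$ at arbitrary heights in $A\R$. The enabling mechanism is naturality of each $\Phi_{ij}$, which allows internal $U$-morphisms to be commuted past the $\Phi$'s and reduces every composability check to the minimal-height configuration where the stated conditions apply directly.
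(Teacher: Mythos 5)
Your argument matches the paper's proof in both directions: in the forward direction you extract $\Phi_{ij}(s) = G\bigl((a_i,s)\leq(a_j,s+2e)\bigr)$ and use thinness of $A\R$ to deduce the two conditions; in the converse you build $G$ from the $\Phi_{ij}$ together with the internal structure maps of the $U_i$ and verify functoriality (which the paper dismisses as ``straightforward calculations'' and which you correctly reduce, via naturality, to the minimal-height configuration $(a_i,s)\leq(a_j,s+2e)\leq(a_k,s+4e)$). The proof is correct and takes essentially the same route.
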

\begin{proof} 
Assume first that $g$ is coherent, so there exists a functor $G:A\R \to \Vect$ satisfying $g = G^\Rcat \circ \eta_A$. To define the desired morphisms $\Phi_{ij}:U_i \to U_jT_{2e}$ in $\Mod$, it suffices to construct linear maps $\Phi_{ij}(s):U_i(s) \to U_j(s+2e)$ indexed by $s \in \R$ and $i,j \leq n$ (subject to the constraint that diagrams from Definition \ref{def:int} commute). To this end, note that $(a_i,s) \leq (a_j,s+2e)$ in $A\R$ because $d(a_i,a_j) \leq 2e$ in $A$ by assumption. Define $\Phi_{ij}(s)$ to be the image under $G$ of $(a_i,s) \leq (a_j,s+2e)$. Since $A\R$ is a thin category and $G$ is a functor, all the required diagrams commute and the two desired properties follow. 

On the other hand, assume the existence of maps $\Phi_{ij}$ satisfying the two properties from the statement of this proposition. We will use them to define a functor $G:A\R \to \Vect$ which renders $g$ coherent by satisfying $\theta(G) = g$.  Set $G(a_i,t) = U_i(t)$ for all $i \leq n$ and $t \in \R$. Given $(a_i,s) \leq (a_j,t)$ in $A\R$, we either have $t-s \geq 2e = d(a_i,a_j)$ if $i \neq j$ or simply $t - s \geq 0$ if $i=j$; so, define 
\begin{align*}
G\left((a_i,s) \leq (a_j,t)\right) = \begin{cases}
																						\Phi_{ij}(s) \circ T_{(t-s)-2e} & i \neq j \\
																						U_i \sigma_{t-s} & i = j
																			\end{cases}.
\end{align*} 
Straightforward calculations confirm that $G$ is a functor, and that $\theta(G) = g$.
\end{proof}

Let $M$ be the metric space which consists of $A$ above, along with an additional point  $a$ so that $d(a,a_i) = e$. The {\bf discrete interpolation problem} for persistence modules seeks to extend our 1-Lipschitz map $g:A \to \Mod\ob$ to a 1-Lipschitz map $\hat{g}:M \to \Mod\ob$ across the obvious inclusion $A \hookrightarrow M$. On the other hand, the {\bf continuous interpolation problem} for persistence modules seeks an extension of $g$ across the inclusion of $A$ as vertices of the standard $(n-1)$-simplex $\Sigma \subset \R^n$, given by:
\[
\Sigma = \left\{(x_1,\ldots,x_n) \in \R^n \mid x_1 + \cdots + x_n  = \sqrt{2}e \text{ and } x_j \geq 0\right\}
\] 

It follows immediately from Theorem \ref{thm:main} that the discrete and continuous interpolation problems both admit solutions (in triplicate) whenever the modules $U_1,\ldots,U_n$ are connected by morphisms $\Phi_{ij}$ which satisfy the two properties from Proposition \ref{prop:oldcoh}.
Note that in the latter case, $g$ extends not only to $\Sigma$ but to $\R^n$.

\subsection{\v{C}ech and Rips Complexes of Persistence Modules}

Let $(M,d_M)$ be an ambient metric space with a distinguished subspace $A \subset M$. We recall the {\bf Vietoris-Rips} complex $\Rips$ and the {\bf \v{C}ech} complex $\Cech$ on $A$. Both are abstract simplicial complexes filtered by a single non-negative real parameter; their vertices are the points of $A$, but the construction of higher-dimensional simplices sets them apart. In particular, a collection $[a_0,\ldots,a_n]$ of points in $A$ forms an $n$-simplex
\begin{enumerate}
\item in $\Rips(A,e)$ if and only if $d_M(a_i,a_j) \leq e$ for $0 \leq i < j \leq n$, and
\item in $\Cech(A,e)$ if and only if some $b$ in $M$ satisfies $d_M(a_i,b) \leq e$ for $0\leq i \leq n$.
\end{enumerate}
Any $b$ which is within distance $e$ of all the points $[a_0,\ldots,a_n]$ is called an {\em $e$-witness} for those points. It is well-known and immediate from the definitions that for each $e \geq 0$ one always has the following simplicial sandwich:
\[
\Cech(A,e) \hookrightarrow \Rips(A,2e) \hookrightarrow \Cech(A,2e),
\]
where the first inclusion follows  directly from the triangle inequality, and the second follows from the fact that any $a_i$ serves as a $2e$-witness for a simplex $[a_0,\ldots,a_n]$ in $\Rips(A,2e)$.

We may ask if these inclusions are tight. For the first inclusion, assume that $M$ is a path metric space (e.g., $\Mod\ob$), and that there exist $a_0,a_1\in A$ with $d_M(a_0,a_1)=2e+\delta$ for some $\delta>0$.  Then $[a_0,a_1]$ is not a $1$-simplex in $\Rips(A,2e)$ and there exists a path from $a_0$ to $a_1$ with length at most $2e+2\delta$. The midpoint of this path is a $(e+\delta)$-witness for $[a_0,a_1]$, which is therefore a $1$-simplex in $\Cech(A,e+\delta)$.
Thus $\Cech(A,e+\delta) \not\hookrightarrow \Rips(A,2e)$ for $\delta>0$.
The second inclusion $\Rips(A,2e) \hookrightarrow \Cech(A,2e)$ might be improved, depending on $M$. For example, $\Rips(A,2e)$ always includes into $\Cech(A,\nicefrac{2e}{\sqrt{3}})$ whenever $A$ is a subset of $\R^2$ with the standard Euclidean metric. 

When working within $\Mod\ob$, one typically strengthens the requirements in the definitions of Rips and \v{C}ech complexes slightly since the infimum over interleavings may not actually be attained. In particular, a collection $[\U_0,\ldots,\U_n]$ of persistence modules forms an $n$-simplex
\begin{enumerate}
\item in $\Rips(\Mod\ob,e)$ iff $\U_i$ and $\U_j$ are $e$-interleaved for all $0 \leq i < j \leq n$, and
\item in $\Cech(\Mod\ob,e)$ iff some $V$ is $e$-interleaved with $\U_i$ for all $0 \leq i \leq n$.
\end{enumerate}

It is straightforward to check that $\Rips(\Mod\ob,2e)$ does not include into $\Cech(\Mod\ob,d)$ for any $d < 2e$ by appealing to Figure \ref{fig:incoh} and the isometry theorem. On the other hand, the following result from \cite{dsn} characterizes those simplices of $\Rips(\Mod\ob,2e)$ which do include into $\Cech(\Mod\ob,e)$. 
\begin{theorem} \label{thm:rips-cech}
\label{thm:dsn}
Let $\U_0,\ldots,\U_n$ be a collection of persistence modules and let $e \geq 0$. Then, $[\U_0,\ldots,\U_n]$ is an $n$-simplex in $\Cech(\Mod\ob,e)$ if and only if there exist morphisms $\Phi_{ij}$ for $i \neq j$ which satisfy the conditions of Proposition \ref{prop:oldcoh}
\end{theorem}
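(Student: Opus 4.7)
The plan is to apply Theorem~\ref{thm:main} with the ``star'' configuration from Section~\ref{ssec:discon}: take $A = \{a_0, \ldots, a_n\}$ with all pairwise distances equal to $2e$, enlarge it to $M = A \cup \{a\}$ with $d(a, a_i) = e$ for each $i$, and let $g: A \to \Mod\ob$ send $a_i \mapsto U_i$. A 1-Lipschitz extension $\hat{g}: M \to \Mod\ob$ places its value $V := \hat{g}(a)$ at interleaving distance at most $e$ from every $U_i$; conversely, any common $e$-neighbor $V$ should be the image of $a$ under such an extension. The theorem thus amounts to the equivalence of coherence of $g$ (which by Proposition~\ref{prop:oldcoh} is exactly the hypothesis on the $\Phi_{ij}$'s) with the existence of a common $V$.

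For the $(\Leftarrow)$ direction, Proposition~\ref{prop:oldcoh} guarantees that $g$ is coherent. Since $\Vect$ is bicomplete, Theorem~\ref{thm:main} furnishes a coherent 1-Lipschitz extension $\hat{g} = \theta(\hat{G}): M \to \Mod\ob$ for some functor $\hat{G}: M\R \to \Vect$. The relations $(a, s) \leq (a_i, s + e)$ and $(a_i, s) \leq (a, s + e)$ in $M\R$ hold since $d(a, a_i) = e$, and their images under $\hat{G}^\Rcat$ are morphisms $V \to U_i T_e$ and $U_i \to V T_e$ with $V = \hat{g}(a)$; the interleaving diagrams of Definition~\ref{def:int} commute automatically because $M\R$ is a thin category, so $V$ is genuinely $e$-interleaved with each $U_i$ and hence $[U_0, \ldots, U_n] \in \Cech(\Mod\ob, e)$.

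For $(\Rightarrow)$, given $e$-interleavings $\alpha_i: V \to U_i T_e$ and $\beta_i: U_i \to V T_e$, I would define $\Phi_{ij} := (\alpha_j T_e) \circ \beta_i: U_i \to U_j T_{2e}$ and verify the two conditions of Proposition~\ref{prop:oldcoh} by direct computation: condition~(1) collapses via the identity $(\beta_j T_e)\alpha_j = V\sigma_{2e}$, naturality of $\alpha_i$, and the identity $(\alpha_i T_e)\beta_i = U_i\sigma_{2e}$; condition~(2) similarly factors $(\Phi_{jk} T_{2e})\Phi_{ij}$ through $VT_e \to VT_{3e}$, where the middle piece becomes a shift of $V\sigma_{2e}$ and naturality of $\alpha_k$ delivers the prescribed identity. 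A slicker repackaging is to build a functor $G: M\R \to \Vect$ directly, sending $(a_i, s) \mapsto U_i(s)$ and $(a, s) \mapsto V(s)$, with morphisms generated by $\alpha_i$, $\beta_i$ together with the internal structure maps of $V$ and the $U_i$; thinness of $M\R$ reduces functoriality to the interleaving identities, and restricting $G$ to $A\R$ exhibits $g$ as coherent, whence Proposition~\ref{prop:oldcoh} hands back the $\Phi_{ij}$'s.

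The main obstacle is shift bookkeeping: every route through $V$ accumulates two $e$-shifts (one from $\beta$ and one from $\alpha$), and matching these against the $T_{2e}$-shifted morphisms demanded by Proposition~\ref{prop:oldcoh} requires careful tracking of naturality squares. Once the shifts are aligned, both directions are formal consequences of Theorem~\ref{thm:main} and of the thinness of the spacetime categories $A\R$ and $M\R$.
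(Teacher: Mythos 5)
Your proof is correct and, for the $(\Leftarrow)$ direction, follows exactly the route the paper indicates: Proposition~\ref{prop:oldcoh} converts the hypothesized $\Phi_{ij}$ into a coherent $g$, Theorem~\ref{thm:main} extends it to a coherent $\hat{g}$ on the star $M = A \cup \{a\}$, and the constraints $(a,s) \leq_M (a_i,s+e)$, $(a_i,s) \leq_M (a,s+e)$ in the thin category $M\R$ push forward through $\hat{G}^\Rcat$ to genuine (not merely up-to-$\varepsilon$) $e$-interleavings between $V = \hat{g}(a)$ and each $U_i$. This is precisely the paper's remark that the theorem is a ``direct consequence of the discrete interpolation discussed in Section~\ref{ssec:discon}.''

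Where you diverge from the paper is the $(\Rightarrow)$ direction. The paper does not reprove it --- the theorem is cited from \cite{dsn} --- whereas you supply an explicit construction: $\Phi_{ij} := (\alpha_j T_e) \circ \beta_i$, verified via naturality and the interleaving identities, or equivalently the construction of a functor $G: M\R \to \Vect$ with $G(a_i,s) = U_i(s)$ and $G(a,s) = V(s)$ whose restriction to $A\R$ realizes $g$ as coherent. This is a sound and useful addition; the only imprecision is the phrase ``thinness of $M\R$ reduces functoriality to the interleaving identities.'' Thinness guarantees that any two parallel composites in $M\R$ are equal, so functoriality of $G$ amounts to showing their $G$-images agree --- and that reduction requires \emph{both} the interleaving identities \emph{and} naturality of $\alpha_i, \beta_i$ (the latter being automatic since they are morphisms in $\Mod$, i.e., natural transformations). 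You do invoke naturality in your first (computational) route, so the gap is cosmetic rather than substantive. The shift bookkeeping you flag as the main hazard does work out: the key moves are $(\beta_j T_e)\alpha_j = V\sigma_{2e}$ followed by a naturality square for $\alpha_k$ against $V\sigma_{2e}$.
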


Thus, a simplex in $\Rips(\Mod\ob,2e)$ forms a simplex in $\Cech(\Mod\ob,e)$ if and only if the module morphisms which realize the pairwise $2e$-interleavings can be chosen to commute (up to factors of the natural transformation $\sigma$). From our perspective here, the preceding result is a direct consequence of the discrete interpolation discussed in Section \ref{ssec:discon}.

\section*{Acknowledgments}

The authors are indebted to the anonymous referees for their thoughtful comments and suggestions. PB's work was partially supported by the AFOSR grant FA9550-13-1-0115. VN's work was supported by The Alan Turing Institute under the EPSRC grant number EP/N510129/1.

 \bibliographystyle{plain}
 \bibliography{pers}

\end{document}